\newtheorem{theorem}{Theorem}[section]
\par\vspace{2.5mm}\noindent\textit{Proof.\ }}
\par\vspace{2.5mm}}
\numberwithin{equation}{section}
\numberwithin{lause}{section}
\begin{document}
\thispagestyle{empty}

%\title{Periodic orbits $1-5$ of quadratic polynomials on a new coordinate plane}
%\author{Pekka Kosunen \\ \\
%\it\small Department of Physics and Mathematics,
%University of Eastern Finland, \\
%\it\small P.O. Box 111, FI-80101 Joensuu,
%Finland\/{\rm ;}
%\it \small pekka.kosunen@uef.fi}

\title{Uniqueness of the sum of points of the period five cycle of quadratic polynomials}

%\title{Periodic orbits $1-5$ of quadratic polynomials on a new coordinate plane}

\author{Pekka Kosunen}

\date{\today}

\subjclass[2010]{Primary 37F10; Secondary 13P}

\keywords{Iteration, quadratic polynomial, cycle of periodic orbit, Gröbner-basis, elimination theory, extension theory.}

\maketitle

\noindent
%\begin{center}
\address{Department of Physics and Mathematics, University of Eastern Finland, P. O. Box 111, FI-80101 Joensuu, Finland;}
\email{pekka.kosunen@uef.fi}
%\thanks{The second author was partially supported by the Academy of Finland grants (\#286877) and (\#268009).}
%\end{center}

\begin{abstract}
It is well known that the sum of points of the period five cycle of the quadratic polynomial $f_{c}(x)=x^{2}+c$ is generally not one-valued. In this paper we will show that the sum of cycle points of the curves of period five is at most three-valued on a new coordinate plane, and that this result is essentially the best possible. The method of our proof relies on a implementing Gr\"obner-bases and especially extension theory from the theory of polynomial algebra.
\end{abstract}

\section{Introduction}
The dynamics of quadratic polynomials is commonly studied by using the family of maps  $f_{c}(x)=x^{2}+c$, where $c\in \mathbb{C}$ and $x_{i+1}=f_{c}(x_{i})=x_{i}^{2}+c$. In the article \cite{5} we presented the corresponding iterating system on a new coordinate plane using the change of variables
\begin{equation}
\left\{%
\begin{array}{l}
 u=x+y=x_{0}+x_{1}  \\
v=x+y^2+y-x^2 =x_{0}+x_{1}^{2}+x_{1}-x_{0}^{2}  \label{3}
\end{array}%
\right.
\end{equation}
to the $(x,y)$-plane model (see \cite{1}). In this new $(u,v)$-plane model, equations of periodic curves are of remarkably lower degree than in earlier models.
Now the dynamics of the $(u,v)$-plane is determined by the iteration of the function
\begin{eqnarray*}
G(u,v)&=&(R(u,v),Q(u,v)) \\
&=&\left(\frac{-u+v+uv}{u},\frac{u^2-u+v-u^2v-uv+uv^2+v^2}{u}\right),
\end{eqnarray*}
which is a two-dimensional quadratic polynomial map defined in the complex $2$-space~$\mathbb{C}^{2}$.
The new iteration system is defined recursively as follows:
\begin{equation}
\left\{%
\begin{array}{l}
      (R_{0}(u,v),Q_{0}(u,v))=(u,v)=(u_{0},v_{0}), \\ \\
      (R_{1}(u,v),Q_{1}(u,v))=(R(u,v),Q(u,v))=(u_{1},v_{1}), \\ \\
      (R_{n+1}(u,v),Q_{n+1}(u,v))=G(R_{n}(u,v),Q_{n}(u,v))=(u_{n+1},v_{n+1}),\label{5}
\end{array}%
\right.
\end{equation}
where
\begin{equation}
\left\{%
\begin{array}{l}
      R_{n+1}(u,v)=Q_{n}(u,v)-1+Q_{n}(u,v)/R_{n}(u,v) \\ \\
      Q_{n+1}(u,v)=R_{n+1}(u,v)(1+Q_{n}(u,v)-R_{n}(u,v)),\label{6}
\end{array}%
\right.
\end{equation}
and $n\in \mathbb{N}\cup\{0\}$. Now $(u,v)$ is fixed $G^{n}$, so $G^{n}(u,v)=(u,v)$, if and only if $(R_{n}(u,v),Q_{n}(u,v))=(u,v)$. The set of such points is the union of all orbits, whose period divides $n$, and the set of periodic points of period $n$ are the points with exact period dividing $n$.

In complex dynamics, the sum of period cycle points has been a commonly used parameter in many connections (see, for example, \cite{4}, \cite{13}, \cite{1}, \cite{17} and \cite{2}). In the article \cite{17} Giarrusso and Fisher used it to the parameterization of the period $3$ hyperbolic components of the Mandelbrot set. Later, in the article \cite{1}, Erkama studied the case of the period $3-4$ hyperbolic components of the Mandelbrot set on the $(x,y)$-plane and completely solved both cases.

Moreover, Erkama \cite{1} has shown that the sum of periodic orbit points
\begin{displaymath}
S_{n}=x_{0}+x_{1}+x_{2}+ \ldots +x_{n-2}+x_{n-1}
\end{displaymath}
is unique when $n=3$ or $n=4$. Conversely, the sum of cyclic points of period three and four determines these orbits uniquely. In the period five case this situation changes and the sum of the cycle points is no longer unique. We can see this property in the articles \cite{4} and \cite{2}, in which Brown and Morton have formed the so called \textit{trace} formulas in the cases of period five and six using $c$ and the sum of period cycle points as parameters. In this paper we will show that by implementing the change of variables ($\ref{3}$), we obtain a new coordinate plane where the sum of period five cycle points is at most three-valued, and show that no better result is obtainable in this coordinate plane. This is done by applying methods of polynomial algebra (without the classical trace formula), as our proof relies on the use of the elimination theory and especially the extension theorem \cite{12}. The extension theorem tells us the best possible result (which the trace formula does not necessarily do) due to the use of Gr\"obner-basis. In the next section we present the most central tools and constructions related to these theorems.

\section{A brief introduction to the elimination theorem}

%Aloitamme ideaalin määritelmästä.
%\begin{maar}
%Polynomirengas $\mathbb{C}[x_{1}
%\ldots x_{n}]$ on kaikkien niiden kompleksisten polynomien joukko, jotka sisältävät muuttujia $x_{1}
%\ldots x_{n}$.
%\end{maar}

%\begin{maar}
%Olkoon $\mathbb{C}[x_{1} \ldots x_{n}]$ polynomirengas. Joukko $I \subset
%\mathbb{C}[x_{1} \ldots x_{n}]$ on ideaali jos:
%\begin{eqnarray}
%(i) && 0 \in I. \nonumber \\
%(ii) && Jos \ f,g\in I, \ niin \ f+g\in I. \nonumber \\
%(iii) && Jos \ f \in I \ ja \ h \in \mathbb{C}[x_{1} \ldots x_{n}], \ niin \
%hf \in I. \nonumber
%\end{eqnarray}
%\end{maar}
%Ideaalien tärkeys osoittautuu siinä, että ne antavat meille kielen
%laskea affiineja varieteettejä. Luonnollinen esimerkki
%ideaaleista on ideaali generoituna äärellisellä määrällä
%polynomeja.
%\begin{esim}
%Olkoon $f_{1}, \ldots , f_{s}$ polynomeja polynomirenkaassa
%$\mathbb{C}[x_{1} \ldots x_{n}]$. Asetamme
%\begin{displaymath}
%\langle f_{1}, \ldots , f_{s} \rangle=\left\{ \sum^{s}_{i=1}
%h_{i}f_{i}: h_{1}, \ldots , h_{s} \in \mathbb{C}[x_{1} \ldots x_{n}]
% \right\}.
%\end{displaymath}
%Tällöin $\langle f_{1}, \ldots , f_{s} \rangle$ on ideaali.
%\end{esim}
%\begin{lause}
%Sanomme että ideaali $I$ on äärellisesti generoitu jos on olemassa
%$f_{1}, \ldots , f_{s}\in \mathbb{C}[x_{1} \ldots x_{n}]$ siten, että
%$I=\langle f_{1}, \ldots , f_{s} \rangle$, ja $f_{1}, \ldots ,
%f_{s}$ on ideaalin $I$ kanta.
%\end{lause}
%Huomaamme, että annetulla ideaalilla voi olla monta eri kantaa, mutta ainakin yksi.
We start with the \textit{Hilbert basis theorem}:
Every \textit{ideal} $I \subset \mathbb{C}[x_{1},
\ldots ,x_{n}]$ has a finite generating set. That is
$I=\langle g_{1}, \ldots , g_{t} \rangle$
for some $g_{1}, \ldots , g_{t} \in I$.
%Toisin sanottuna jokainen ideaali on äärellisesti generoitu.
Hence $\langle g_{1}, \ldots , g_{t} \rangle$ is the ideal generated by the elements $ g_{1}, \ldots , g_{t}$,
in other words $ g_{1}, \ldots , g_{t}$ is the basis of the ideal. The so called \textit{Gr\"obner-basis} has proved to be especially useful in many connections \cite{12}, for example in kinematic analysis of mechanisms (see \cite{16} and \cite{15}). In order to introduce this basis we need the following constructions.
%Erityisen hyödylliseksi näistä osoittautuu niin kutsuttu
%Gröbner-kanta, jonka määrittelemme seuraavaksi. Tätä varten
%tarvitsemme seuraavat käsitteet.

Let the $f\in \mathbb{C}[x_{1}, \ldots ,x_{n}]$ be the polynomial given by
\begin{displaymath}
f=\sum_{\alpha}a_{\alpha}x^{\alpha},
\end{displaymath}
where $a_{\alpha}\in \mathbb{C}$, $\alpha=(\alpha_{1},\ldots ,\alpha_{n})$ and $x^{\alpha}=x_{1}^{\alpha_{1}} \cdot x_{2}^{\alpha_{2}} \cdots  x_{n}^{\alpha_{n}}$ is a monomial. Then the \textit{multidegree} of $f$ is
\begin{displaymath}
\displaystyle{mdeg}(f)=max \{\alpha \mid a_{\alpha}\neq 0\},
\end{displaymath}
the \textit{leading coefficent} of $f$ is
\begin{displaymath}
LC(f)=a_{mdeg(f)},
\end{displaymath}
the \textit{leading monomial} of $f$ is
\begin{displaymath}
LM(f)=x^{mdeg(f)},
\end{displaymath}
and the \textit{leading term} of $f$ is
\begin{displaymath}
LT(f)=LC(f)LM(f).
\end{displaymath}

%Gröbner-kanta voidaan kaskea eksplisiittisesti ja sen laskemiseen käytetään yleensä niin sanottua Buchenbergerin algoritmia, johon sisältyy usean muuttujan jakolaskualgoritmi. Tämän vuoksi polynomien %termit joudutaan järjestämään johonkin monomijärjestykseen. Gröbner-kanta voidaan laskea käyttämällä mitä tahansa monomijärjestystä, mutta operaatiomäärien erot voivat olla suuria. Käytännössä %Gröbner-kanta joudutaan laskemaan usein tietokoneella. Tehokas työkalu tähän on Singular-ohjelma, joka on suunniteltu polynomiyhtälöiden operoimista varten.\\
% Joukossa $\mathbb{N}$ on luonnollinen järjestysrelaatio $<$, esimerkiksi $3<4$. Tämä indusoi yhden muuttujan polynomien monomeille järjestyksen $x^{k}<x^{n}$ $\Leftrightarrow$ $k<n$. Tarkastellaan %useamman muuttujan tapausta.
To calculate a Gr\"obner basis of an ideal we need to order terms of polynomials by using a \textit{monomial ordering}. A Gr\"obner basis can be calculated by using any monomial ordering, but differences in the number of operations can be very significant. An effective tool to calculate the Gr\"obner basis is the software \textit{Singular}, which has been especially designed for operating with polynomial equations. Next we will define a monomial ordering of nonlinear polynomials.

%Tällöin monomeille $x^{\alpha}$ ja $x^{\beta}$ pätee $\alpha \leq \beta$ jos $\alpha_{1}\leq \beta_{1}, \alpha_{2} \leq \beta_{2}, \ldots,
%\alpha_{n} \leq \beta_{n}$ joukossa $\mathbb{N}^{n}$.
Relation $<$ is the \textit{linear ordering} in the set $S$, if $x<y$, $x=y$ or $y<x$ for all $x,y \in S$.
A monomial ordering in the set $\mathbb{N}^{n}$  is a relation $\prec$ if
\begin{displaymath}
\begin{array}{lll}
1. && \prec \ \textrm{is} \ \textrm{linear} \ \textrm{ordering},  \\
2. && \textrm{implication} \ x^{\alpha} \prec x^{\beta}  \Rightarrow x^{\alpha+\gamma} \prec x^{\beta +\gamma} \ \textrm{holds} \ \textrm{for} \ \textrm{all} \ \alpha,\beta,\gamma \in \mathbb{N}^{n},  \\
3. && x^{\alpha}>1.
\end{array}
\end{displaymath}
To compute elimination ideals we need \textit{product orderings}. Let $\succ_{A}$ be an ordering for the variable $x$, and let $\succ_{B}$ be ordering for the variable $y$ in the ring \\ $\mathbb{C}[x_{1},
\ldots ,x_{n},y_{1}, \ldots ,y_{m}]$. Now we can define the product ordering as follows:
\begin{displaymath}
x^{\alpha}y^{\beta}\succ x^{\gamma}y^{\delta} \qquad \textrm{if} \qquad
\left\{%
\begin{array}{ll}
      x^{\alpha} \succ_{A}  x^{\gamma} \quad & \textrm{or} \\
       x^{\alpha} =  x^{\gamma} \quad &\textrm{and} \quad y^{\beta} \succ_{B} y^{\delta}.
\end{array}%
\right.
\end{displaymath}
%\begin{esim}
%Järjestetään polynomin $f$ termit aakkosjärjestyksen $<_{lex}$ mukaan (suurin vasemmalla). Kun $x>y>z$ saadaan
%\begin{displaymath}
%f=-5x^{3}+7x^{2}z^{2}+4xy^{2}z+4z^{2}.
%\end{displaymath}\\
%Kun $z>y>x$ saadaan
%\begin{displaymath}
%f=7x^{2}z^{2}+4z^{2}+4xy^{2}z-5x^{3}.
%\end{displaymath}
There are several monomial orders but we need only the \textit{lexicographic order} $\prec_{lex}$ in the elimination theory.
Let $\alpha,\beta\in \mathbb{N}^{n}$. Then we say that $x^{\alpha} \prec_{lex} x^{\beta}$ if  $x^{\alpha_{1}}=x^{\beta_{1}},  \ldots, x^{\alpha_{k-1}}=x^{\beta_{k-1}}, x^{\alpha_{k}} \succeq x^{\beta_{k}}$
and $x^{\alpha_{k+1}} \prec x^{\beta_{k+1}}$.
%Monomijärjestyksiä on useita (esimerkiksi suurimman potenssien summan omaava monomi dominoi), mutta tarvitsemme eliminointiteoreemassa vain aakkosjärjestystä.
%\end{esim}
One of the most important tools in the elimination theory is the \textit{Gr\"obner basis of an ideal}:
Fix a monomial order. A finite subset
\begin{displaymath}
G_{I}=\{g_{1}, \ldots , g_{t}\} \subset I
\end{displaymath}
of an ideal $I$ is said to be a Gr\"obner basis (or standard basis) if
\begin{displaymath}
\langle LT(g_{1}) , \ldots , LT(g_{t}) \rangle=\langle LT(I) \rangle.
\end{displaymath}
%Gröbner-kanta antaa meille keinon muodostaa eliminointi-ideaaleja ja luo tätä kautta perustan eliminointiteorialle.
Based on the Hilbert basis theorem we know that every ideal $I \subset \mathbb{C}[x_{1},
\ldots ,x_{n}]$ has a Gr\"obner basis $G_{I}=\{g_{1}, \ldots , g_{s}\}$ so that
\begin{displaymath}
\langle G_{I} \rangle=I.
\end{displaymath}

It is essential to construct also an affine variety corresponding to the ideal.
%Varieteetin käsite on eliminointiteoreeman kannalta erittäin oleellinen:
Let $f_{1}, \ldots , f_{s}$ be polynomials in the ring $\mathbb{C}[x_{1},
\ldots ,x_{n}]$. Then we set
\begin{displaymath}
\mathbf{V}(f_{1}, \ldots , f_{s})=\{(a_{1} , \ldots ,a_{n}) \in
\mathbb{C}^{n}:f_{i}(a_{1} , \ldots ,a_{n})=0 \ \textrm{for} \ \textrm{all} \ 1\leq i \leq s \},
\end{displaymath}
and we call $\mathbf{V}(f_{1}, \ldots , f_{s})$ as the \textit{affine variety} defined by $f_{1}, \ldots ,f_{s}$.
%In the other words, the affine variety $\mathbf{V}(f_{1}, \ldots ,f_{s}) \subset k^{n}$ is the set of all solutions to the equation system
%$f_{1}(x_{1} \ldots x_{n})= \ldots =f_{s}(x_{1} \ldots x_{n})=0$.
Now if $I=\langle f_{1}, \ldots , f_{s} \rangle$, so $\mathbf{V}(I)=\mathbf{V}(f_{1}, \ldots , f_{s})$
and naturally we obtain the variety of the ideal as the variety of its Gr\"obner basis:
 $\mathbf{V}(I)=\mathbf{V}(\langle G_{I}  \rangle)$.

 When we consider ideals and their algebraic varieties we are sometimes just interested about polynomials $f\in \mathbb{C}[x_{1},
\ldots ,x_{n}]$, which belong to the original ideal $f\in I$, but contain only certain variables of the ring variables of $\mathbb{C}[x_{1},
\ldots ,x_{n}]$. For this purpose we need \textit{elimination ideals}.
Let $I=\langle f_{1}, \ldots ,f_{s} \rangle \subset \mathbb{C}[x_{1},
\ldots ,x_{n}]$. The $k$:th elimination ideal $I_{k}$ is the ideal of $\mathbb{C}[x_{k+1}, \ldots ,x_{n}]$ defined by
\begin{displaymath}
I_{k}=I \cap \mathbb{C}[x_{k+1}, \ldots ,x_{n}].
\end{displaymath}
Next we give the an important \textit{elimination theorem} which we use in our proof.
\begin{theorem}[The Elimination Theorem]\label{elim}
Let $I \subset \mathbb{C}[x_{1},
\ldots ,x_{n}]$ be an ideal and let $G$ be a Gr\"obner basis of $I$ with respect to lexicographic order, where $x_{1}
\succ x_{2} \succ \ldots \succ x_{n}$. Then for every $0\leq k \leq n$, the set
\begin{displaymath}
G_{k}=G \cap \mathbb{C}[x_{k+1}, \ldots ,x_{n}]
\end{displaymath}
is a Gröbner basis of the $k$:th elimination ideal $I_{k}$.
\end{theorem}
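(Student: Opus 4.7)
The plan is to prove both inclusions required to identify $G_k$ as a Gr\"obner basis of $I_k$. The first, namely $G_k \subseteq I_k$, is immediate from the definitions: any $g \in G_k$ lies in $G \subseteq I$ by construction of the basis, and simultaneously in $\mathbb{C}[x_{k+1}, \ldots ,x_n]$ by the intersection defining $G_k$, hence $g \in I \cap \mathbb{C}[x_{k+1}, \ldots ,x_n] = I_k$. In particular $\langle G_k \rangle \subseteq I_k$.

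The substance of the proof is the verification that $\langle LT(g) : g \in G_k \rangle = \langle LT(I_k) \rangle$. I would fix an arbitrary nonzero $f \in I_k$. Because $f \in I$ and $G$ is a Gr\"obner basis of $I$, there must exist some $g \in G$ with $LT(g)$ dividing $LT(f)$. Since $f$ involves only the variables $x_{k+1}, \ldots ,x_n$, its leading term $LT(f)$ is a monomial in those same variables, and therefore the divisor $LT(g)$ is as well.

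The crucial step -- and the one that actually forces the choice of lexicographic order with $x_1 \succ x_2 \succ \cdots \succ x_n$ -- is upgrading this to the conclusion $g \in G_k$, i.e.\ that $g$ itself lies in $\mathbb{C}[x_{k+1}, \ldots ,x_n]$. For this I would invoke the defining property of $\prec_{lex}$: any monomial containing a positive power of some $x_j$ with $j \leq k$ strictly dominates every monomial lying in $\mathbb{C}[x_{k+1}, \ldots ,x_n]$. Since $LT(g)$ already belongs to $\mathbb{C}[x_{k+1}, \ldots ,x_n]$ and by definition dominates every other term of $g$, no term of $g$ can involve $x_1, \ldots ,x_k$. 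Therefore $g \in \mathbb{C}[x_{k+1}, \ldots ,x_n] \cap G = G_k$, as required.

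Putting the pieces together, for every $f \in I_k$ I will have produced some $g \in G_k$ with $LT(g) \mid LT(f)$, which exhibits $\langle LT(G_k) \rangle = \langle LT(I_k) \rangle$ and completes the proof that $G_k$ is a Gr\"obner basis of $I_k$. I expect the only genuine obstacle to be the lex-order comparison in the third paragraph, where the strict dominance of any $x_j$ (with $j \leq k$) over the remaining variables is what makes the whole argument work; the other steps are essentially a matter of tracing the definitions of an elimination ideal and of a Gr\"obner basis.
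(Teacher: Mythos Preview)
Your argument is correct and is essentially the standard textbook proof of the Elimination Theorem (as in Cox--Little--O'Shea). Note, however, that the paper does not actually prove this statement: it is quoted as background from reference~\cite{12} and used as a tool, so there is no ``paper's own proof'' to compare against. Your write-up would serve perfectly well as the omitted proof.
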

%Tarkastellaan seuraavaa esimerkkiä:
%\begin{esim}
%Olkoon
%\begin{eqnarray}
%f_{1}&=&(x-1)^{2}+y^{2}-1 \nonumber \\
%f_{2}&=& 4(x-1)^{2}+y^{2}+ xy -2, \nonumber
%\end{eqnarray}
%missä $f_{1},f_{2} \in \mathbb{C}[x,y]$ ja $I=\langle f_{1},f_{2}  \rangle$.
%Valitaan monomijärjestys $<_{lex}$ ja $y>x$. Tällöin Gröbner-kanta on $G=\{ g_{1},g_{2}\}$, missä
%\begin{eqnarray}
%g_{1}&=& y-5x^{3}+19x^{2}-21x+6 \nonumber \\
%g_{2}&=& 5x^{4}-19x^{3}+24x^{2}-12x+2. \nonumber
% \end{eqnarray}
% Koska $\mathbf{V}(I)=\mathbf{V}(\langle G  \rangle)$, niin edellisen määritelmän perusteella $g_{2}$ on eliminointi-ideaalin $I_{y}$ Gröbner-kanta $G_{y}=G \bigcap \mathbb{C}[x]$ ja nyt siis %$I_{y}=g_{2}$.
%\end{esim}
The elimination theorem is closely related to the \textit{extension theorem}, which tells us the correspondence between varieties of the original ideal and the elimination ideal. In other words, if we apply this theorem to a system of equations we see whether the partial solution $V(I_{k})$ of the system of equations is also a solution of the whole system $V(I)$.
% Eliminointiteoriaan liittyy kiinteästi laajennusteoreema, jonka avulla saadaan selville eliminointi-ideaalin ja alkuperäisen ideaalin varieteettien vastaavuus. Toisin sanoen sovellettuna teoreemaa
% yhtälöryhmien tapaukseen nähdään laajeneeko osaratkaisu $V(I_{k})$ koko systeemin ratkaisuksi $V(I)$.

%\begin{lause}(The Extension Theorem)  \label{5a}
%Olkoon $I=\langle f_{1}, \ldots ,f_{s} \rangle \subset
%\mathbb{C}[x_{1} \ldots x_{n}]$ ja olkoon $I_{1}$ ideaalin $I$
%ensimmäinen eliminointi-ideaali. Kirjoitetaan $f_{i}$ jokaiselle
%$1\leq i \leq s$ muodossa
%\begin{displaymath}
%f_{i}=g_{i}(x_{2}, \ldots , x_{n})x_{1}^{N_{i}}+ \ termit \ joissa
%\ deg(x_{1}) < N_{i},
%\end{displaymath}
%missä $N_{i}\geq 0$ ja $g_{i} \subset \mathbb{C}[x_{1} \ldots
%x_{n}]$ , $g_{i}\neq 0$. (Asetamme $g_{i}=0$ kun $f_{i}=0$.)
%Oletetaan, että meillä on osaratkaisu $(a_{2}, \ldots , a_{n})\in
%\mathbf{V}(I_{1})$. Jos $(a_{2}, \ldots , a_{n}) \notin
%\mathbf{V}(g_{1}, \ldots ,g_{s})$, niin on olemassa $a_{1}\in
%\mathbb{C}$ että $(a_{1},a_{2}, \ldots , a_{n}) \in \mathbf{V}(I)$.
%\end{lause}

\begin{theorem}[The Extension Theorem]  \label{5a}
Let $I=\langle f_{1}, \ldots ,f_{s} \rangle \subset
\mathbb{C}[x_{1}, \ldots ,x_{n}]$ and let $I_{1}$ be the first elimination ideal of $I$.
For each $1\leq i \leq s$ write $f_{i}$ in the form
\begin{displaymath}
f_{i}=g_{i}(x_{2}, \ldots , x_{n})x_{1}^{N_{i}}+ \ \textrm{terms} \ \textrm{in} \ \textrm{which} \ \textrm{deg}(x_{1}) < N_{i},
\end{displaymath}
where $N_{i}\geq 0$ and $g_{i} \subset \mathbb{C}[x_{2}, \ldots
,x_{n}]$ , $g_{i}\neq 0$.
Suppose that we have a partial solution $(a_{2}, \ldots , a_{n})\in
\mathbf{V}(I_{1})$. If $(a_{2}, \ldots , a_{n}) \notin
\mathbf{V}(g_{1}, \ldots ,g_{s})$, then there exists $a_{1}\in
\mathbb{C}$ such that $(a_{1},a_{2}, \ldots , a_{n}) \in \mathbf{V}(I)$.

\end{theorem}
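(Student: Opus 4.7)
The plan is to prove this classical extension result via the theory of resultants, which is the standard tool for detecting common roots of univariate polynomials. The key idea is to specialize the variables $x_2,\ldots,x_n$ at the given partial solution $(a_2,\ldots,a_n)$ and ask whether the resulting univariate polynomials $\tilde f_i(x_1):=f_i(x_1,a_2,\ldots,a_n)$ admit a common root $a_1\in\mathbb{C}$; such a root is exactly the sought-after extension.

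First I would establish the case $s=2$ under the simplifying assumption that both $f_1$ and $f_2$ have positive degree in $x_1$, i.e.\ $N_1,N_2\geq 1$. The central fact is that the Sylvester resultant $h(x_2,\ldots,x_n):=\mathrm{Res}(f_1,f_2,x_1)$ lies in the first elimination ideal $I_1$: this follows because $h$ admits an explicit representation $h=Af_1+Bf_2$ with $A,B\in\mathbb{C}[x_1,\ldots,x_n]$, while the combinatorial form of the Sylvester determinant guarantees that $x_1$ does not appear in $h$. Evaluating at the partial solution $(a_2,\ldots,a_n)\in\mathbf{V}(I_1)$ yields $h(a_2,\ldots,a_n)=0$, and together with the hypothesis that at least one leading coefficient $g_i(a_2,\ldots,a_n)$ is nonzero, the classical property of the resultant shows that $\tilde f_1(x_1)$ and $\tilde f_2(x_1)$ share a root in $\mathbb{C}$, furnishing the required $a_1$.

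Next I would reduce the general case $s\geq 2$ to this two-polynomial setting. Generators with $N_i=0$ already lie in $\mathbb{C}[x_2,\ldots,x_n]\cap I\subset I_1$ and therefore vanish automatically at $(a_2,\ldots,a_n)$; they impose no new constraint on $a_1$ and can be discarded. For the remaining generators with $N_i\geq 1$ one may proceed by induction on $s$, or replace them by two generic linear combinations $F_1=\sum u_i f_i$ and $F_2=\sum v_i f_i$ and apply the two-polynomial case; here one must verify that the leading $x_1$-coefficients of $F_1,F_2$ inherit the required non-vanishing at $(a_2,\ldots,a_n)$, which is where the full hypothesis $(a_2,\ldots,a_n)\notin\mathbf{V}(g_1,\ldots,g_s)$ is genuinely used.

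The main technical obstacle will be the careful management of leading-coefficient degeneracies. The resultant of two univariate polynomials can vanish for the ``wrong'' reason when a leading term collapses upon specialization, producing a spurious zero of $h$ that does not correspond to any honest common root. This degeneracy is precisely what forces the extra non-vanishing assumption on the $g_i$ in the statement, and checking that this hypothesis suffices to rule out every spurious case --- in particular after passing to linear combinations or inducting on the number of generators --- is the delicate heart of the argument. Everything else (the Hilbert basis theorem, existence of Gr\"obner bases, and the standard resultant identities) is already available as classical machinery.
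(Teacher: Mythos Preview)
The paper does not prove the Extension Theorem: it is quoted without proof from Cox--Little--O'Shea as a standard tool, so there is no argument in the paper to compare your proposal against.

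On its own merits your outline is the classical resultant approach, and the two-generator case is handled correctly. The genuine gap is the passage from $s=2$ to general $s$. Naive induction on $s$ fails: knowing that $\tilde f_1,\ldots,\tilde f_{s-1}$ share a common root and that $\tilde f_1,\tilde f_s$ share a common root does not force all $s$ specializations to share the \emph{same} root of $\tilde f_1$. Your alternative---two generic linear combinations $F_1=\sum u_i f_i$ and $F_2=\sum v_i f_i$---also breaks: the $x_1$-leading coefficient of such a combination involves only those $g_i$ with $N_i=\max_j N_j$, and the hypothesis $(a_2,\ldots,a_n)\notin\mathbf V(g_1,\ldots,g_s)$ does not guarantee that any $g_i$ of \emph{maximal} $N_i$ survives at the partial solution, so the non-vanishing you say you must verify can simply fail. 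The standard repair (and the proof in Cox--Little--O'Shea) is asymmetric: reorder so that $g_1(a)\neq 0$, keep $f_1$ fixed, and set $F=u_2 f_2+\cdots+u_s f_s$ with the $u_j$ as new indeterminates; every $u$-coefficient of $\mathrm{Res}(f_1,F,x_1)$ then lies in $I_1$, the specialization lemma for resultants (which needs only $g_1(a)\neq 0$, with no control on the leading term of $F$) forces $\mathrm{Res}(\tilde f_1,\tilde F,x_1)=0$ identically in the $u_j$, and an irreducibility argument on $\mathbb C^{s-1}$ singles out one root of $\tilde f_1$ annihilating every $\tilde f_i$.
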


\section{On properties of points sums of period $3-5$ cycles}
In this section we first prove the uniqueness properties of points sums of cycles of period three and four by using methods from polynomial algebra in a new way. After this we concentrate on the period five case and show that the sum of period five cycle points is at most three-valued. The next result shows the relation between the sums of cycle points of the $(x,y)$-plane \cite{1} and the $(u,v)$-plane \cite{5}:
\begin{theorem} \label{a}
Let $x_{0}, x_{1}, x_{2}, \ldots, x_{n-1}$ periodic $n$ orbit points. If
\begin{displaymath}
S_{n}=x_{0}+x_{1}+x_{2}+ \ldots +x_{n-2}+x_{n-1},
\end{displaymath}
 then by transformation $(\ref{3})$ and $(\ref{5})$
\begin{displaymath}
S_{n}=\frac{1}{2}S_{n}^{1}=\frac{1}{2}S_{n}^{2},
\end{displaymath}
where
\begin{displaymath}
S_{n}^{1}=u_{0}+u_{1}+u_{2}+ \ldots +u_{n-2}+u_{n-1}
\end{displaymath}
and
\begin{displaymath}
S_{n}^{2}=v_{0}+v_{1}+v_{2}+ \ldots +v_{n-2}+v_{n-1}.
\end{displaymath}
\end{theorem}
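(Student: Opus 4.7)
The plan is to unravel what the change of variables~(\ref{3}) means when applied at each step of the orbit, thus obtaining compact closed forms for $u_i$ and $v_i$ purely in terms of the original cycle points $x_0,x_1,\ldots,x_{n-1}$. Once this is done, the two claimed identities reduce to a simple counting argument.

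First I would verify that the two-dimensional map $G$ was constructed in \cite{5} precisely so that if $(u_i,v_i)$ encodes the consecutive pair $(x_i,x_{i+1})$ via~(\ref{3}), then $G(u_i,v_i)$ encodes the pair $(x_{i+1},x_{i+2})$. Consequently, applying the change of variables at the $i$-th step yields
\begin{displaymath}
u_i = x_i + x_{i+1}, \qquad v_i = x_i + x_{i+1}^2 + x_{i+1} - x_i^2.
\end{displaymath}
The first equality is immediate from the definition of $u$. For the second, I would use the iteration $x_{i+1}=x_i^2+c$, so that $x_{i+1}-x_i^2 = c$ and $x_{i+1}^2+c = x_{i+2}$. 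Substituting gives the key simplification
\begin{displaymath}
v_i = x_i + x_{i+2}.
\end{displaymath}
This is really the only computational step in the proof.

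Second, I would sum these two identities over $i=0,1,\ldots,n-1$, invoking periodicity $x_{i+n}=x_i$ to handle the wrap-around. Each orbit point $x_j$ then appears exactly twice in $\sum_i u_i$ (once as $x_i$ when $i=j$, once as $x_{i+1}$ when $i=j-1 \pmod n$), and exactly twice in $\sum_i v_i$ (once as $x_i$ when $i=j$, once as $x_{i+2}$ when $i=j-2 \pmod n$). Hence
\begin{displaymath}
S_n^1 = \sum_{i=0}^{n-1}(x_i+x_{i+1}) = 2S_n, \qquad S_n^2 = \sum_{i=0}^{n-1}(x_i+x_{i+2}) = 2S_n,
\end{displaymath}
which is exactly the claim $S_n = \tfrac{1}{2}S_n^1 = \tfrac{1}{2}S_n^2$.

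The only mildly delicate point is bookkeeping: one must confirm that the $n$-th iterate $(u_n,v_n)=G^n(u_0,v_0)$ really corresponds to shifting the underlying $x$-orbit by one step at a time, rather than some other encoding. Since this semiconjugacy between the $(x,y)$-iteration and the $G$-iteration is the very origin of the definition of $G$ in~\cite{5}, I would simply cite it rather than reprove it. Beyond that, the proof is a one-line algebraic manipulation followed by the telescoping count above, so no genuine obstacle arises.
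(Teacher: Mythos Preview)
Your proof is correct and follows essentially the same route as the paper: both arguments use $u_i = x_i + x_{i+1}$ and $v_i = x_i + x_{i+2}$, then sum over $i$ and invoke periodicity so that each $x_j$ is counted twice. If anything, you are slightly more explicit than the paper in deriving $v_i = x_i + x_{i+2}$ from the iteration $x_{i+1}=x_i^2+c$, whereas the paper simply writes the sum in that form without comment.
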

\begin{proof}
By writing out both components we obtain
\begin{displaymath}
\begin{array}{lll}
S_{n}^{1}&=&u_{0}+u_{1}+u_{2}+ \ldots +u_{n-2}+u_{n-1}\\
&=&x_{0}+x_{1}+x_{1}+x_{2}+x_{2}+x_{3}+ \ldots +x_{n-2}+x_{n-1}+x_{n-1}+x_{n} \\
&=&x_{0}+x_{1}+x_{1}+x_{2}+x_{2}+x_{3}+ \ldots +x_{n-2}+x_{n-1}+x_{n-1}+x_{0} \\
&=&2(x_{0}+x_{1}+x_{2}+x_{3}+ \ldots +x_{n-2}+x_{n-1})
\end{array}
\end{displaymath}
and similarly
\begin{displaymath}
\begin{array}{lll}
S_{n}^{2}&=&v_{0}+v_{1}+v_{2}+ \ldots +v_{n-2}+v_{n-1}\\
&=&x_{0}+x_{2}+x_{1}+x_{3}+x_{2}+x_{4}+ \ldots +x_{n-2}+x_{n}+x_{n-1}+x_{n+1}\\
%&=&x_{0}+x_{2}+x_{1}+x_{3}+x_{2}+x_{4}+ \ldots +x_{n-2}+x_{0}+x_{n-1}+x_{1}\\
&=&2(x_{0}+x_{1}+x_{2}+ \ldots x_{n-2}+x_{n-1}).
\end{array}
\end{displaymath}
\end{proof}

\subsection{The uniqueness of cycle points sums of period three and four orbits}
The sums of points of the period three and four cycles are obtained in \cite{1} as
\begin{equation}
S_{3}=x_{0}+x_{1}+x_{2} \label{cc1}
\end{equation}
and
\begin{equation}
S_{4}=x_{0}+x_{1}+x_{2}+x_{3}. \label{cc2}
\end{equation}
According to Theorem $\ref{a}$ and
by using the formula ($\ref{5}$) we obtain on the $(u,v)$-plane
\begin{equation}
S_{3}(u,v)=\frac{1}{2}(u_{0}+u_{1}+u_{2})= \,{\frac {{u}^{2}-u+v+2\,uv}{2u}} \label{2aa1}
\end{equation}
and
\begin{equation}
S_{4}(u,v)=\frac{1}{2}(u_{0}+u_{1}+u_{2}+u_{3})= -{\frac {-v+u-{u}^{2}+{u}^{2}v-u{v}^{2}-{v}^{2}}{u}}. \label{2aa2}
\end{equation}
Based on the article \cite{5}, the equations of periodic orbits of period three and four are $P_{3}(u,v)=0$ and $P_{4}(u,v)=0$, where
\begin{displaymath}
P_{3}(u,v)=uv+1+v,
\end{displaymath}
and
\begin{eqnarray}
P_{4}(u,v)&=&-{u}^{2}v+{u}^{2}{v}^{2}-u+uv+u{v}^{2}-{v}^{2}-{v}^{3}-u{v}^{3} \nonumber \\
&=&u^{2}(-v^{2}+v)+u(v^{3}-v^{2}-v+1)+v^{3}+v^{2}.\nonumber
\end{eqnarray}
Now we form polynomials $B_{3}(u,v,S_{3})=0$ and $B_{4}(u,v,S_{4})=0$ based on formulas ($\ref{2aa1}$) and ($\ref{2aa2}$) as
\begin{eqnarray}
B_{3}(u,v,S_{3})&=&2uS_{3}-({u}^{2}-u+v+2\,uv) \nonumber \\
&=&-{u}^{2}+ \left( 2\,S_{3}+1-2\,v \right) u-v \nonumber
\end{eqnarray}
and
\begin{eqnarray}
B_{4}(u,v,S_{4})&=&uS_{4}-v+u-{u}^{2}+{u}^{2}v-u{v}^{2}-{v}^{2} \nonumber \\
&=&\left( -1+v \right) {u}^{2}+ \left( S_{4}-{v}^{2}+1 \right) u-v-{v}^{2}.\nonumber
\end{eqnarray}
Based on the previous equations we can form the pair of equations
\begin{equation}
\left\{%
\begin{array}{lll}
   P_{3}(u,v)&=& 0  \\
   B_{3}(u,v,S_{3})&=& 0  \label{222a}
    \end{array}%
\right.
\end{equation}
and
\begin{equation}
\left\{%
\begin{array}{lll}
   P_{4}(u,v)&=& 0  \\
   B_{4}(u,v,S_{4})&=& 0,  \label{222b}
    \end{array}%
\right.
\end{equation}
and obtain the ideals
\begin{eqnarray}
I_{3}&=& \langle P_{3}(u,v),B_{3}(u,v,S_{3})\rangle \nonumber \\
&=&\langle uv+1+v, -{u}^{2}+ \left( 2\,S_{3}+1-2\,v \right) u-v \rangle, \nonumber
\end{eqnarray}
and
\begin{eqnarray}
I_{4}&=&\langle
P_{4}(u,v),B_{4}(u,v,S_{4}) \rangle \nonumber \\ &=& \langle  u^{2}(-v^{2}+v)+u(v^{3}-v^{2}-v+1)+v^{3}+v^{2}, \nonumber \\
&& \left( -1+v \right) {u}^{2}+ \left( S_{4}-{v}^{2}+1 \right) u-v-{v}^{2} \rangle. \nonumber
\end{eqnarray}
We eliminate from these ideals the variable $u$ and obtain the Gr\"obner-basis of the eliminated ideals $I_{3u}$ and $I_{4u}$ to calculate the Gr\"obner-basis of the ideals $I_{3}$ and $I_{4}$ using the Singular program (\cite{10}). Gr\"obner-bases of the ideals $I_{3}$ and $I_{4}$, by using the ordering  $\prec_{lex}$, where $S_{3} \prec_{lex} v \prec_{lex} u$ and $S_{4} \prec_{lex} v \prec_{lex} u$, are
% Eliminoidaan tästä $u$ muodostamalla eliminointi-ideaali $I_{u}$ laskemalla ideaalin $I$ Gröbner-kanta Singular ohjelman avulla.
%Ideaalin $I$ Gröbner-kannaksi saadaan järjestystä $<_{lex}$, $u>v>S_{5}$ käyttämällä
\begin{displaymath}
G_{3}=\{g_{31},g_{32}\},
\end{displaymath}
where
\begin{displaymath}
\begin{array}{lll}
g_{31}&=&v^{3}-2v^{2}S_{3}-2vS_{3}-3v-1, \\
g_{32}&=&u+v^{2}-2vS_{3}-2S_{3}-2,
\end{array}
\end{displaymath}
and
\begin{displaymath}
G_{4}=\{g_{41},g_{42},g_{43},g_{44}\},
\end{displaymath}
where
\begin{displaymath}
\begin{array}{lll}
g_{41}&=& v^{4}-v^{3}S_{4}+v^{3}-v^{2}S_{4}-v^{2}-v, \\
      g_{42} &=& uv^{2}-uvS_{4}-u, \\
       g_{43}&=& u^{2}S_{4}-uv^{3}+uv^{2}S_{4}-uv^{2}+uvS_{4}+uv-uS_{4}^{2}+u-v^{3}+v^{2}S_{4} \\
       &&-2v^{2}+vS_{4}-v, \\
       g_{44}&=& u^{2}v-u^{2}-uv^{2}+uS_{4}+u-v^{2}-v.
\end{array}
\end{displaymath}
Thus $g_{31}$ and $g_{41}$ depend only on the variables $v$ and $S_{5}$. Based on the elimination Theorem $\ref{elim}$ the set
\begin{displaymath}
G_{3u}=G_{3} \cap \mathbb{C}[v,S_{3}]=\{g_{31}\}
\end{displaymath}
is the Gr\"obner-basis of the elimination ideal $I_{3u}$ and so $V(I_{3u})=V(g_{31})$.
At the same way the set
\begin{displaymath}
G_{4u}=G_{4} \cap \mathbb{C}[v,S_{4}]=\{g_{41}\}
\end{displaymath}
is the Gr\"obner-basis of the elimination ideal $I_{4u}$ and so $V(I_{4u})=V(g_{41})$.
In the case $g_{31}=0$ it follows that
\begin{equation}
S_{3}=\,{\frac {-1-3\,v+{v}^{3}}{2v \left( v+1 \right) }}. \label{ab1}
\end{equation}
If $g_{41}=0$
we have
\begin{equation}
S_{4}={\frac {-{v}^{4}-{v}^{3}+{v}^{2}+v}{-{v}^{3}-{v}^{2}}}=\frac{v^{2}-1}{v}. \label{ab2}
\end{equation}
As we can see, in both cases the sum of the points of cycles of the given period is unique. In other words, the orbit sums $S_{3}$ and $S_{4}$ uniquely determine the orbit.
If we eliminate in the first case the variable $v$ instead of the variable $u$, we obtain the Gr\"obner-basis
\begin{displaymath}
G_{3v}=u^{3}-2u^{2}S_{3}-2uS_{3}-3u-1,
\end{displaymath}
which gives the same result as ($\ref{ab1}$). However, the same procedure in the period four case produces the Gr\"obner-basis
\begin{displaymath}
\begin{array}{ll}
G_{4v}&=u^{5}S_{4}-2u^{4}S_{4}^{2}+u^{3}S_{4}^{3}-u^{3}S_{4}^{2}-2u^{3}S_{4}-4u^{3}+u^{2}S_{4}^{3}+2u^{2}S_{4}^{2}+4u^{2}S_{4}+uS_{4} \\
&=\left( {u}^{3}+{u}^{2} \right) {S_{4}}^{3}+ \left( -{u}^{3}+2\,{u}^{2}-2
\,{u}^{4} \right) {S_{4}}^{2}+ \left( {u}^{5}-2\,{u}^{3}+u+4\,{u}^{2}
 \right) S_{4}-4\,{u}^{3}
\end{array}
\end{displaymath}
and this is of higher degree than ($\ref{ab2}$).

\subsection{On the uniqueness of the cycle points sum of period five orbits}
Next we prove that in the case of period five cycles, the sum of period five points is at most three-valued. We use in this proof the Gr\"obner-basis of an ideal, like before in period three and four cases, which produce for us the Gr\"obner-basis of the elimination ideal. Because this method relies on bases, the following result is optimal.
\begin{theorem}\label{per5}
The sum of period five cycle points is at most three-valued.
%Periodin viisi syklin pisteiden summa saa saman arvon korkeintaan kolme kertaa.
\end{theorem}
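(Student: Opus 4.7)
The plan is to extend to the period-five case the exact Gröbner basis strategy that succeeded for periods three and four. Starting from the equation $P_5(u,v)=0$ of the period-five curve on the $(u,v)$-plane (available from \cite{5}), I would first compute the iterates $(R_k,Q_k)$ for $k=1,2,3,4$ via the recursion~(\ref{6}), form
\[
S_5(u,v)=\tfrac{1}{2}(u_{0}+u_{1}+u_{2}+u_{3}+u_{4})=\tfrac{1}{2}(u+R_{1}+R_{2}+R_{3}+R_{4}),
\]
and clear the common denominator, which is a power of $u$, to obtain a polynomial $B_5(u,v,S_5)\in\mathbb{C}[u,v,S_5]$ encoding the sum. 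The basic object is then the ideal
\[
I_{5}=\langle P_{5}(u,v),\,B_{5}(u,v,S_{5})\rangle\subset\mathbb{C}[u,v,S_{5}].
\]

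The central computation is to feed $I_5$ into Singular~\cite{10} and compute a Gröbner basis $G_5$ with respect to the lexicographic order $S_5\prec_{lex}v\prec_{lex}u$. By the Elimination Theorem~\ref{elim}, the intersection $G_{5}\cap\mathbb{C}[v,S_{5}]$ is a Gröbner basis of the first elimination ideal $I_{5u}=I_{5}\cap\mathbb{C}[v,S_{5}]$, and I expect a single generator $g_{5,1}(v,S_{5})$. The upper-bound half of the theorem -- that $S_5$ is at most three-valued -- then amounts to observing that $g_{5,1}$ has degree exactly three in $S_5$. In contrast to the period three and four situations, where the analogous generators $g_{31}$ and $g_{41}$ were linear in the sum and forced a single rational expression in $v$, here the cubic degree prevents such a collapse and permits up to three distinct values of $S_5$ for each generic~$v$ on the period-five curve.

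The sharpness of the bound, i.e.\ that no lower-valued description of $S_5$ is possible in this coordinate system, is where the Extension Theorem~\ref{5a} comes in. Writing $P_5$ and $B_5$ as polynomials in $u$ with coefficients in $\mathbb{C}[v,S_5]$ and letting $g_1,g_2$ denote their respective leading coefficients, one verifies that $\mathbf{V}(g_{1},g_{2})$ meets the variety $\mathbf{V}(g_{5,1})$ only in a proper subvariety. Theorem~\ref{5a} then lifts every generic partial solution $(v,S_5)\in\mathbf{V}(I_{5u})$ to a full solution $(u,v,S_5)\in\mathbf{V}(I_{5})$, so all three roots of $g_{5,1}(v,\cdot)$ are realised by actual period-five orbits and the value three cannot be lowered without giving up the Gröbner basis viewpoint.

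The principal obstacle I anticipate is computational rather than conceptual: the polynomial $P_5(u,v)$ is already of considerably higher degree than $P_3$ or $P_4$, and $B_5$ involves four successive applications of the rational map $G$, so the inputs to Singular are large and the lex Gröbner basis is heavy. A subordinate issue is checking the hypothesis of the Extension Theorem on the leading coefficients $g_1,g_2$, which requires confirming that the locus where extension could fail is genuinely lower-dimensional and hence cannot reduce the generic number of values of $S_5$ below three.
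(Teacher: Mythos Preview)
Your overall strategy---forming the ideal $I_5=\langle P_5,B_5\rangle$, computing a lex Gr\"obner basis with $S_5\prec v\prec u$, extracting the generator of the elimination ideal $I_{5u}$, and invoking the Extension Theorem~\ref{5a} for sharpness---is exactly the paper's approach. The gap is in how you plan to read off the conclusion. You predict that the generator of $I_{5u}$ will have degree three in $S_5$, and that this cubic degree is what makes $S_5$ ``three-valued.'' In fact the computation gives a generator that factors as $v^{6}(v+1)^{2}C(v,S_5)$, where $C$ has degree $7$ in $S_5$ and degree $15$ in $v$; the degree in $S_5$ is not three and plays no role in the argument.

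The ``three'' comes from the degree in $v$. For a fixed value of $S_5$, the polynomial $C(v,S_5)$ has at most $15$ roots in $v$. The Extension Theorem (with leading coefficients $g_1=a_7=-v^2(v-1)^2$ and $g_2=b_4=2v(v-1)$, so that $\mathbf{V}(g_1,g_2)=\{v=0,1\}$) guarantees that each such root with $v\neq 0,1$ lifts to a point $(u,v)\in\mathbf{V}(I_5)$ on the period-five curve. Since a single period-five cycle contributes five distinct points $(u_k,v_k)$, all sharing the same orbit sum, those $15$ points are accounted for by at most three cycles; the paper checks $S_5=0$ explicitly and finds $15$ distinct roots, establishing sharpness. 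Note also that $S_5(u,v)$ is by construction a single-valued rational function of $(u,v)$, so your phrase ``up to three distinct values of $S_5$ for each generic $v$'' misidentifies what is multi-valued: it is the map from cycles to their sums that is three-to-one, not $S_5$ as a function on the curve.
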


\begin{proof}
By the article \cite{5}, the equation for period five orbit on the $(u,v)$-plane is of the form $P_{5}(u,v)=0$, where
%\begin{eqnarray}
%     P_{5}=&&-3\,{u}^{5}{v}^{6}+3\,{u}^{4}{v}^{2}-{u}^{7}{v}^{2}+u{v}^{3}-{u}^{6}v-{u}^{5}v+2\,{u}^{3}v-4\,{u}^{3}{v}^{2}-2\,u{v}^{4} \nonumber \\
%        &&+2\,{u}^{4}v+6\,{u}^{2}{v}^{4}+2\,{u}^{7}{v}^{3}-{u}^{7}{v}^{4}+{u}^{4}{v}^{7}+{v}^{5}+{v}^{7}+{u}^{6}{v}^{2}+6\,{u}^{2}{v}^{7}+2\,{v}^{6} \nonumber \\
%        &&+3\,{u}^{6}{v}^{5}+{u}^{3}+4\,u{v}^{7}+4\,{u}^{3}{v}^{7}-12\,{u}^{4}{v}^{6}+14\,{u}^{5}{v}^{5}-8\,{u}^{6}{v}^{4}-16\,{u}^{3}{v}^{6} \\ \label{123}
%        &&-5\,{v}^{3}{u}^{5}-12\,{v}^{4}{u}^{5}+5\,{u}^{6}{v}^{3}+18\,{u}^{4}{v}^{5}+19\,{v}^{4}{u}^{3}-6\,{u}^{2}{v}^{6}+3\,u{v}^{6}+6\,{u}^{4}{v}^{4} \nonumber \\
%        &&-16\,{u}^{4}{v}^{3}+7\,{u}^{5}{v}^{2}-4\,{v}^{5}u-12\,{v}^{5}{u}^{2}-5\,{u}^{3}{v}^{3}-2\,{v}^{2}{u}^{2}+4\,{u}^{2}{v}^{3}. \nonumber
%\end{eqnarray}
%Writing out formula
% ($\ref{123}$) in terms of $u$ and $v$ and marked this polynomial $A(u,v)=P_{5}$ we obtain
%\begin{displaymath}
\begin{eqnarray}
P_{5}(u,v)&=&u^{7}(-{v}^{4}+2{v}^{3}-{v}^{2})+u^{6}(3{v}^{5}-8{v}^{4}+5{v}^{3}+{v}^{2}-v) \nonumber \\
       &&+u^{5}(3{v}^{6}+14{v}^{5}-12{v}^{4}-5{v}^{3}+7{v}^{2}-v)+u^{4}({v}^{7}  \nonumber \\
       && -12{v}^{6}+18{v}^{5}+6{v}^{4}-16{v}^{3}+3{v}^{2}+2v)+u^{3}(4{v}^{7}  \label{1123} \\
       && -16{v}^{6}+19{v}^{4}-5{v}^{3}-4{v}^{2}+2v+1)+u^{2}(6{v}^{7}-6{v}^{6} \nonumber \\
       && -12{v}^{5}+6{v}^{4}+4{v}^{3}-2{v}^{2})+u(4{v}^{7}+3{v}^{6}-4{v}^{5} \nonumber \\
       && -2{v}^{4}+{v}^{3})+{v}^{7}+2{v}^{6}+{v}^{5}.\nonumber
\end{eqnarray}
%\end{displaymath}
According to the Theorem $\ref{a}$, the sum
\begin{equation}
S_{5}=x_{0}+x_{1}+x_{2}+x_{3}+x_{4} \label{cc}
\end{equation}
of the period five points satisfies
\begin{displaymath}
S_{n}=\frac{1}{2}S_{n}^{1}=\frac{1}{2}S_{n}^{2},
\end{displaymath}
 and based on the formula ($\ref{5}$) we obtain
\begin{eqnarray}
S_{5}(u,v)&=& \frac{1}{2}(u_{0}+u_{1}+u_{2}+u_{3}+u_{4}) \nonumber \\ \label{2aa}
 &=& \displaystyle\frac
{-3\,{u}^{2}+4\,{u}^{2}v+3\,uv-4\,{u}^{3}{v}^{3}+2\,{u}^{2
}{v}^{4}+4\,u{v}^{4}+{u}^{3}-2\,{u}^{4}v-2\,{u}^{3}v}{2{u}^{2}}
\\ \nonumber & &+\frac{+2\,{u}^{2}{v}^{2}
-2\,u{v}^{2}+2\,{v}^{3}+2\,{u}^{4}{v}^{2}+6\,{u}^{3}{v}^{2}-8\,{u}^{2}
{v}^{3}-2\,u{v}^{3}+2\,{v}^{4}}{2{u}^{2}} \nonumber
\end{eqnarray}
on the $(u,v)$-plane.
We form from this the polynomial
\begin{displaymath}
\begin{array}{lll}
B_{5}(u,v,S_{5})&=&u^{4}(2{v}^{2}-2v)+u^{3}(-4{v}^{3}+6{v}^{2}-2v+1)\\
&&+u^{2}(-2S_{5}+2{v}^{4}-8{v}^{3}+2{v}^{2}+4v-3)\\
&&+u(4{v}^{4}-2{v}^{3}-2{v}^{2}+3v)+2{v}^{4}+2{v}^{3}.
\end{array}
\end{displaymath}
Now we can form the pair of equations
\begin{equation}
\left\{%
\begin{array}{lll}
   P_{5}(u,v)&=& 0  \\
   B_{5}(u,v,S_{5})&=& 0,  \label{222}
    \end{array}%
\right.
\end{equation}
and the two polynomials $P_{5}(u,v)$ and $B_{5}(u,v,S_{5})$ form an ideal
\begin{eqnarray}
I_{5}&=&\langle
P_{5}(u,v),B_{5}(u,v,S_{5})\rangle \nonumber \\ &=&\langle a_{7}u^{7}+a_{6}u^{6}+a_{5}u^{5}+a_{4}u^{4}+a_{3}u^{3}+a_{2}u^{2}+a_{1}u+a_{0}, \nonumber \\
&& b_{4}u^{4}+b_{3}u^{3}+b_{2}u^{2}+b_{1}u+b_{0}  \rangle, \nonumber
\end{eqnarray}
where
\begin{eqnarray}
&&a_{0}= {v}^{7}+2{v}^{6}+{v}^{5}\nonumber\\
&&a_{1}=4{v}^{7}+3{v}^{6}-4{v}^{5}-2{v}^{4}+{v}^{3} \nonumber\\
&&a_{2}= 6{v}^{7}-6{v}^{6}-12{v}^{5}+6{v}^{4}+4{v}^{3}-2{v}^{2}\nonumber\\
&&a_{3}=4{v}^{7}-16{v}^{6}+19{v}^{4}-5{v}^{3}-4{v}^{2}+2v+1 \nonumber\\
&&a_{4}={v}^{7}-12{v}^{6}+18{v}^{5}+6{v}^{4}-16{v}^{3}+3{v}^{2}+2v \nonumber\\
&&a_{5}= 3{v}^{6}+14{v}^{5}-12{v}^{4}-5{v}^{3}+7{v}^{2}-v\nonumber\\
&&a_{6}= 3{v}^{5}-8{v}^{4}+5{v}^{3}+{v}^{2}-v\nonumber\\
&&a_{7}= -{v}^{4}+2{v}^{3}-{v}^{2}\nonumber\\
&&b_{0}= 2{v}^{3}\nonumber\\
&&b_{1}= 4{v}^{4}-2{v}^{3}-2{v}^{2}+3v\nonumber\\
&&b_{2}= -2S_{5}+2{v}^{4}-8{v}^{3}+2{v}^{2}+4v-3\nonumber\\
&&b_{3}= -4{v}^{3}+6{v}^{2}-2v+1\nonumber\\
&&b_{4}=2{v}^{2}-2v. \nonumber
\end{eqnarray}
We eliminate from this the variable $u$ by forming the Gr\"obner-basis $G_{5u}$ of the elimination ideal $I_{5u}$ in order to calculate the Gr\"obner-basis $G_{5}$ of the ideal $I_{5}$ using Singular. We obtain the Gr\"obner-basis of the ideal $I$ as
% Eliminoidaan tästä $u$ muodostamalla eliminointi-ideaali $I_{u}$ laskemalla ideaalin $I$ Gröbner-kanta Singular ohjelman avulla.
%Ideaalin $I$ Gröbner-kannaksi saadaan järjestystä $<_{lex}$, $u>v>S_{5}$ käyttämällä
\begin{displaymath}
G_{5}=\{g_{51},g_{52},g_{53},g_{54},g_{55},g_{56}\}
\end{displaymath}
using ordering  $\prec_{lex}$, where $S_{5} \prec_{lex} v \prec_{lex} u$.
Here $g_{51},g_{52},g_{53},g_{54}$ and $g_{55}$ depend on the variables $u$, $v$ and $S_{5}$, and $g_{56}$ depends only on the variables $v$ and $S_{5}$. By the elimination theorem the set
\begin{displaymath}
G_{5u}=G \cap \mathbb{C}[v,S_{5}]=\{g_{56}\}
\end{displaymath}
is the Gr\"obner-basis of the elimination ideal $I_{5u}$ and so $V(I_{5u})=V(g_{56})$.
Now the Gr\"obner-basis of the elimination ideal $I_{5u}$ is of the form
\begin{eqnarray}
&G_{5u}=&v^{6}(v+1)^{2}(c_{0}{v}^{15}+c_{1}{v}^{14}+c_{2}{v}^{13}+c_{3}{v}^{12}+c_{4}{v}^{11}+c_{5}{v}^{10}+c_{6}{v}^{9} \label{G5} \\
      &&+c_{7}{v}^{8}+c_{8}{v}^{7}+ c_{9}{v}^{6}+c_{10}{v}^{5}+c_{11}{v}^{4}+c_{12}{v}^{3}+c_{13}{v}^{2}+c_{14}v+c_{15}), \nonumber
\end{eqnarray}
where

\begin{eqnarray}
 c_{0}&=&27 \nonumber \\
 c_{1}&=&-162S_{5} \nonumber \\
 c_{2}&=& 252S_{5}^{2}-432\,S_{5}-684 \nonumber \\
 c_{3}&=& 280S_{5}^{3}+2592S_{5}^{2}+4128\,S_{5}+556 \nonumber \\
 c_{4}&=& -1264S_{5}^{4}-5760S_{5}^{3}-8712S_{5}^{2}+
236\,S_{5}+4002 \nonumber \\
 c_{5}&=& 1440S_{5}^{5}+5888S_{5}^{4}+6864
S_{5}^{3}-8440S_{5}^{2}-19596\,S_{5}-4336 \nonumber \\
 c_{6}&=&-704S_{5}^{6}-2816S_{5}^{5}+320S_{5}
^{4}-8380+19584
S_{5}^{3}+37536S_{5}^{2}+11528\,S_{5} \nonumber \\
 c_{7}&=& 128S_{5}^{7}+512S_{5}^{6}-3328S_{5}^{5}-18112S_{5}^{4}-30144S_{5}^{3}+
1120S_{5}^{2}\nonumber \\
&&+39192\,S_{5}+14868 \nonumber \\
 c_{8}&=& 1664S_{5}^{6}+7488S_{5}^{5}+7824S_{5}^{4}-
21520S_{5}^{3}-64076S_{5}^{2}-38238\,S_{5} +4003\nonumber \\
 c_{9}&=&-256S_{5}^{7}-1152S_{5}^{6}+1952S_{5}^{5}+19360
S_{5}^{4}+44040S_{5}^{3}+22980S_{5}^{2}\nonumber \\
&&-29970\,S_{5}-
19924 \nonumber \\
 c_{10}&=& -
1216S_{5}^{6}-6336S_{5}^{5}-11216S_{5}^{4}+5848S_{5}^{3}+46108S_{5}^{2}\nonumber \\
&&+43516\,
S_{5}+5736  \nonumber \\
 c_{11}&=& 128S_{5}^{7}+640S_{5}^
{6}-160S_{5}^{5}-8208S_{5}^{4}-25384S_{5}^{3}-25368S_{5}^{2}\nonumber \\
&&+3504\,S_{5} +10380\nonumber \\
 c_{12}&=& 256S_{5}^{6}+1664S_{5}^{5}-16730\,S_{5}+4432S_{5}^{4}+2056S_{5}^{3}-11160
S_{5}^{2}-4909  \nonumber \\
 c_{13}&=&96S_{5}^{5}+1104S_{5
}^{4}+4240S_{5}^{3}+6396S_{5}^{2}+2070\,S_{5} -1934 \nonumber \\
 c_{14}&=&216S_{5}^{3}+1068S_{5}^{2}+1974\,S_{5}+1347  \nonumber \\
 c_{15}&=&-27. \nonumber
\end{eqnarray}

By ($\ref{G5}$) $G_{5u}$ is formed as a product of three terms. We denote the last of these terms in ($\ref{G5}$) by
$C(v,S_{5})$. Now we obtain the variety $V(I_{u})$ of the elimination ideal as the union of three varieties corresponding to the factors of $G_{5u}$ of as follows
\begin{eqnarray}
\mathbf{V}(I_{5u})&=&\mathbf{V}(v^{6}) \bigcup
 \mathbf{V}\left((v+1)^{2}\right) \bigcup
 \mathbf{V}\left(C(v,S_{5})\right) \nonumber \\
&=& \left\{(0,S_{5}),(-1,S_{5})\right\} \bigcup
 \mathbf{V}\left(C(v,S_{5})\right). \nonumber
\end{eqnarray}\\
Note that $G_{5u}$ is of degree $23$ with respect to the variable $v$ and of degree $7$
with respect to the variable $S_{5}$. We denote, according to the extension theorem,
\begin{displaymath}
f_{i}=g_{i}(v,S_{5})u^{N_{i}}+ \ \textrm{terms} \ \textrm{such} \ \textrm{that} \ \textrm{deg}(u) < N_{i},\ i=1,2
\end{displaymath}
where
\begin{displaymath}
g_{1}=a_{7}=-{v}^{4}+2{v}^{3}-{v}^{2}
\end{displaymath}
and
\begin{displaymath}
g_{2}=b_{4}=2{v}^{2}-2v .
\end{displaymath}
The corresponding varieties are
\begin{displaymath}
\mathbf{V}(g_{1})=\{(0,S_{5}),(1,S_{5})\}=\mathbf{V}(g_{2}),
\end{displaymath}
 so
\begin{displaymath}
  \mathbf{V}(g_{1},g_{2})=\mathbf{V}(g_{1}) \bigcap
 \mathbf{V}(g_{2})=\{(0,S_{5}),(1,S_{5})\}.
\end{displaymath}
In other words for all $v\neq 0$ and $v\neq 1$  we have $(v, S_{5}) \notin
\mathbf{V}(g_{1}, g_{2})$ and in that case by the extension theorem $\ref{5a}$ then there exists $u\in \mathbb{C}$
so that $(u,v, S_{5}) \in \mathbf{V}(I_{5})$, so all partial solutions $\mathbf{V}(I_{5u})=((v,S_{5})|v\neq 0,v\neq 1)$ extend as solutions of the original system
$(\ref{222})$. Since the term $C(v,S_{5})$ is of degree $15$ with respect to the variable $v$, it follows by the fundamental theorem of algebra that the equation $C(v,S_{5})=0$ has at most $15$ different roots.
For example, for the value $S_{5}=0$ we obtain the Gr\"obner-basis of the elimination polynomial
\begin{eqnarray*}
G_{5u}&=&27\,{v}^{15}-684\,{v}^{13}+556\,{v}^{12}+4002\,{v}^{11}-4336\,{v}^{10}-8380\,{v}^{9}+14868\,{v}^{8}+ \\ && 4003\,{v}^{7} -19924\,{v}^{6}+5736\,{v}^{5}+
10380\,{v}^{4}-4909\,{v}^{3}-1934\,{v}^{2}+1347\,v-27,
\end{eqnarray*}
for which the variety $\mathbf{V}(I_{5u})$ includes $15$ different values. From these five are real and the rest ten are complex numbers. According to the extension theorem, for every pair of points
$(v_{1},0),\ldots ,(v_{15},0)$ we find the corresponding value of the variable $u$ so that $(u_{1},v_{1},0),\ldots
,(u_{15},v_{15},0) \in \mathbf{V}(I_{5})$. Consequently the sum of period five cycle points attains the same value at most three times.

\end{proof}
 We obtain also the same result if we eliminate the variable $v$ from the pair of equations ($\ref{222}$) using the ordering $\prec_{lex}$, where $S_{5} \prec_{lex} u \prec_{lex} v$.

\section{Conflict of interest}

The author declares that there is no conflict of interest regarding the publication of this paper.

%\vspace*{0.5cm}
%\begin{center}
%''The world around us is very complicated. The tools at our
%disposal to describe it are very weak.''
%\end{center}
%\vspace*{0.1cm}
%\begin{center}
%Benoit Mandelbrot
%\end{center}

\vspace{1.5cm}

\noindent
\small{\textsc{Pekka \ Kosunen}}\\
\small{Department of Physics and Mathematics, University of Eastern Finland,
P.O.\ Box 111, FI-80101 Joensuu, Finland}\\
\footnotesize{\texttt{pekka.kosunen@uef.fi}}\\

\end{document}